\newtheorem{lemma}{Lemma}
\newtheorem{theorem}{Theorem}
\newtheorem{remark}{Remark}
\newtheorem{corollary}{Corollary}
\newtheorem{definition}{Definition}
\journal{Communications in nonlinear analysis and ...}
\begin{document}

\begin{frontmatter}

\title{Existence and uniqueness of global solutions of fractional functional differential equations with bounded delay}

\author[Firstaddress]{Chung-Sik Sin\corref{mycorrespondingauthor}}
\cortext[mycorrespondingauthor]{Corresponding author}
 \ead{chongsik@163.com}

\address[Firstaddress]{Faculty of Mathematics, \textit{\textbf {Kim Il Sung}} University, Kumsong Street, Taesong District, Pyongyang, D.P.R.KOREA}

\begin{abstract}
This paper deals with initial value problems for fractional functional differential equations with bounded delay.
The fractional derivative is defined in the Caputo sense.
By using the Schauder fixed point theorem and the properties of the Mittag-Leffler function, new existence and uniqueness results for global solutions of the initial value problems are established.
In particular, the unique existence of global solution is proved under the condition close to the Nagumo-type condition.
\end{abstract}

\begin{keyword}
Caputo derivative, fractional functional differential equation with delay, existence and uniqueness of solutions, initial value problem
\end{keyword}

\end{frontmatter}


\section{Introduction}\label{Sec:1}

The present paper considers initial value problems for fractional functional differential equations of the form
\begin{equation}
\label{governing_equation}
^CD^\alpha u(t) =f(t,u_t),
\end{equation}
subject to initial conditions
\begin{equation}
\label{initial_condition}
u^{(i)}(t)=\phi^{(i)}(t) \text{ on } [-h,0], i=0,1,...,\lceil \alpha \rceil-1,
\end{equation}
where $ \alpha,h>0 $, $ \phi \in C^{\lceil \alpha \rceil-1}[-h,0] $, the symbol $ ^cD^\alpha $ denotes the Caputo fractional differential operator and $ u_t $ is defined by $ u_t(\theta)=u(t+\theta) \text{ for } \theta \in [-h,0] $.
Here $\lceil \alpha \rceil$ is the smallest integer $ \geq \alpha $.

Fractional calculus has been widely used to describe the complex nonlinear phenomena in continuum mechanics, thermodynamics, quantum mechanics, plasma dynamics, electrodynamics and so on \cite{Uchaikin}. The constitutive relation of the viscoelastic media is successfully modeled by fractional differential equations \cite{Mainardi_book}. Wang et al. \cite{WangHuang} proposed a fractional order financial system with time delay and considered the dynamical behaviors of such a system.

Existence and uniqueness of solutions of initial value problems for fractional ordinary differential equations have been investigated by many authors \cite{DieFor,DieBoo,DieNag,KilSri,Lin,LakLee,Sin}.
Diethelm and Ford \cite{DieFor} used the Schauder fixed point theorem to prove that the initial value problem for the fractional differential equation with Caputo derivative has a local solution under appropriate assumptions.
Lakshmikantham \cite{LakLee} first extended the Nagumo-type existence and uniqueness result for fractional differential equations involving Riemann-Liouville fractional derivative.
In \cite{DieNag} the classical Nagumo-type theorem is generalized to Caputo-type fractional differential equations.
Lin \cite{Lin} obtained existence results for solutions of the initial value problems under more general assumptions.
Recently, Sin et al. \cite{Sin} improved sufficient conditions for existence and uniqueness of global solutions of the initial value problem by proving a new property of the two parameter Mittag-Leffler function.

With the development of mathematical theory on fractional ordinary differential equations, fractional functional differential equations with delay have also been intensively studied \cite{Benchohra,Lakshmikantham,ZhouJiao,AgarwalZhou,Zhou,YangCao,WangXiao}.
Lakshmikantham \cite{Lakshmikantham} established existence results for solutions of initial value problems for fractional functional differential equations with bounded delay.
Agarwal et al. \cite{AgarwalZhou} used Krasnoselskii¡Çsfixed point theorem to prove sufficient conditions for existence of solutions of fractional neutral functional differential equations with bounded delay.
Zhou et al. \cite{ZhouJiao} obtained various criteria on existence and uniqueness of solutions of fractional neutral functional differential equations with infinite delay.
Yang et al. \cite{YangCao} considered existence and uniqueness of local and global solutions of the initial value problem (\ref{governing_equation})-(\ref{initial_condition}). 
In order to obtain the existence result for global solutions, they supposed the following condition:
 there exist constants $ c_1, c_2 \geq 0 $ and $ 0<\lambda<1 $ such that
\begin{equation}
\label{previous_condition}
|f(x,u_t)|\leq c_1+c_2 \|u_t\|_{C[-h,0]}^{\lambda}.
\end{equation} 
As pointed in \cite{YangCao}, the condition is violated even by some very elementary equations like linear equation.
They proved the unique existence of solution when $ f $ is continuous and satisfies the Lipschitz condition with respect to the second variable.

In this paper, by using Schauder fixed point theorem and properties of two parameter Mittag-Leffler function, existence and uniqueness results for global solutions of the equation (\ref{governing_equation})-(\ref{initial_condition}) are improved. Firstly, the condition $ 0<\lambda<1 $ for global solutions is replaced with the more general one $ 0<\lambda \leq 1 $.  Then the unique existence of global solutions is established when $ f $ satisfies the condition close to the Nagumo-type condition weaker than the Lipschitz condition.

The rest of the paper is organized as follows.
Section \ref{Sec:2} introduces the preliminary results which are used in deriving the main results of this paper.
In Section \ref{Sec:3}, the existence of solutions of the initial value problem (\ref{governing_equation})-(\ref{initial_condition}) is discussed.
Section \ref{Sec:4} deals with the uniqueness of global solutions of the equation.

\section{Preliminaries}\label{Sec:2}
In this section we give definitions and preliminary results which are needed in our investigation.
Firstly, we recall the concepts of Riemann-Liouville fractional integral operator and Caputo fractional differential operator.
\begin{definition}[\cite{DieBoo}]
Let $ \beta,l \geq 0 $ and $ u \in L^{1}[0,l] $.
The Riemann-Liouville integral of order $ \beta $ of $ u $ is defined by
\begin{equation}
\nonumber
 I^\beta u(t)={\frac{1}{\Gamma (\beta) } \int^{t}_{0}{(t-s)}^{\beta-1} u(s) ds}.
\end{equation}
\end{definition}

\begin{definition}[\cite{DieBoo}]
Let $ \beta,l\geq 0 $ and $ D^{\lceil \beta \rceil}u \in L^{1}[0,l] $. The Caputo fractional derivative of order $ \beta $ of $ u $  is defined by
\begin{equation}
 \nonumber
^CD^\beta u(t)=I^{\lceil \beta \rceil-\beta}D^{\lceil \beta \rceil}u(t).
 \end{equation}
\end{definition}
Then the Caputo fractional derivative of order $ \beta $  is defined for $ u $ belonging to the space $ AC^{\lceil \beta \rceil}[0,l] $ of absolutely continuous functions of order $\lceil \beta \rceil$.
\begin{definition}
Let $ T>0 $. A function $ u:[-h,T] \rightarrow R $  is called a solution of the initial value problem (\ref{governing_equation})-(\ref{initial_condition}) on $ [0,T]$ if
$ u|_{[-h,0]}=\phi$, $ u|_{[0,T]} \in AC^{\lceil \alpha \rceil}[0,T] $ and $ u $ satisfies the equation (\ref{governing_equation}) for $ t \in [0,T] $.
\end{definition}
Here $ u|_{[0,T]} $  means the restriction of the function $ u $ to $[0,T]$.
For the next lemmas, we make the following assumption:\\
(H2-1) $ T>0 $ and the function $f :[0,T]\times C[-h,0] \rightarrow R $ is  continuous.
\begin{lemma}
Suppose that (H2-1) holds and $ u:[-h,T] \rightarrow R $ satisfies the conditions such that $ u|_{[-h,0]}=\phi$ and $ u|_{[0,T]} \in AC^{\lceil \alpha \rceil}[0,T] $. Then the function $ u $ is a solution of the initial value problem (\ref{governing_equation})-(\ref{initial_condition}) on $ [0,T]$ if and only if $ u|_{[0,T]} $ is a solution of the integral equation
\begin{equation}\label{equivalent_integral_equation}
u(t)=\sum^{\lceil \alpha \rceil-1}_{i=0}\frac{t^i}{i!} \phi^{(i)}(0)+\frac{1}{\Gamma (\alpha) }\int_0^t (t-s)^{\alpha-1}f(s,u_s)ds .
\end{equation}
\end{lemma}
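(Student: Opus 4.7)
The plan is to establish the equivalence by inverting the Caputo operator with the Riemann-Liouville integral in one direction, and by applying the Caputo operator to the integral equation in the other. Throughout I exploit the semigroup property $I^\beta I^\gamma = I^{\beta+\gamma}$, the identity $^CD^\alpha I^\alpha g = g$ for continuous $g$, and the fact that $^CD^\alpha$ annihilates polynomials of degree strictly less than $\lceil\alpha\rceil$.

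For the necessity direction ($\Rightarrow$), I would assume that $^CD^\alpha u(t) = f(t,u_t)$ on $[0,T]$ and apply $I^\alpha$ to both sides. On the right this immediately produces $\frac{1}{\Gamma(\alpha)}\int_0^t (t-s)^{\alpha-1} f(s,u_s)\,ds$, which is well-defined because $s \mapsto f(s,u_s)$ is continuous by (H2-1) and $s \mapsto u_s$ is continuous into $C[-h,0]$. On the left, using the definition of the Caputo derivative and the semigroup property,
\begin{equation*}
I^\alpha\,{}^CD^\alpha u(t) = I^\alpha I^{\lceil\alpha\rceil-\alpha} D^{\lceil\alpha\rceil} u(t) = I^{\lceil\alpha\rceil} D^{\lceil\alpha\rceil} u(t).
\end{equation*}
Since $u|_{[0,T]} \in AC^{\lceil\alpha\rceil}[0,T]$, the classical iterated integration-by-parts formula gives
\begin{equation*}
I^{\lceil\alpha\rceil} D^{\lceil\alpha\rceil} u(t) = u(t) - \sum_{i=0}^{\lceil\alpha\rceil-1}\frac{t^i}{i!}\,u^{(i)}(0).
\end{equation*}
Inserting the initial conditions $u^{(i)}(0) = \phi^{(i)}(0)$ and rearranging yields (\ref{equivalent_integral_equation}).

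For the sufficiency direction ($\Leftarrow$), I would start from (\ref{equivalent_integral_equation}) and apply $^CD^\alpha$ to both sides. The polynomial $\sum_{i=0}^{\lceil\alpha\rceil-1}\frac{t^i}{i!}\phi^{(i)}(0)$ is annihilated because each monomial $t^i$ with $i < \lceil\alpha\rceil$ has vanishing Caputo derivative. The second term is $I^\alpha g(t)$ with $g(s) = f(s,u_s)$ continuous, so $^CD^\alpha I^\alpha g = g$, which yields $^CD^\alpha u(t) = f(t,u_t)$. The initial values $u^{(i)}(0) = \phi^{(i)}(0)$ for $0 \le i \le \lceil\alpha\rceil-1$ are recovered by differentiating (\ref{equivalent_integral_equation}) $i$ times and evaluating at $0$: the polynomial contributes $\phi^{(i)}(0)$, while the $i$-fold derivative of $\frac{1}{\Gamma(\alpha)}\int_0^t (t-s)^{\alpha-1} f(s,u_s)\,ds$ vanishes at $t=0$ because $i < \lceil\alpha\rceil \le \alpha + 1$, so the integrand retains a nonnegative power of $t$ as a factor after differentiation.

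The main obstacle is not conceptual but technical: one must justify the formula $I^{\lceil\alpha\rceil} D^{\lceil\alpha\rceil} u = u - \sum_{i=0}^{\lceil\alpha\rceil-1} \tfrac{t^i}{i!} u^{(i)}(0)$ under the hypothesis $u|_{[0,T]} \in AC^{\lceil\alpha\rceil}[0,T]$, and, symmetrically, argue that the fractional integral of a continuous function has vanishing derivatives of order $<\lceil\alpha\rceil$ at the origin. Both are standard in the Caputo framework and can be invoked from \cite{DieBoo}, so the proof reduces to a careful bookkeeping of these composition identities combined with the initial data.
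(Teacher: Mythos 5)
Your proposal is correct and is the standard argument for this equivalence. Note that the paper itself gives no proof here --- it simply defers to Lemma~2.8 of the cited work of Yang and Cao --- and that reference proceeds exactly as you do: apply $I^\alpha$ together with the Taylor formula $I^{\lceil\alpha\rceil}D^{\lceil\alpha\rceil}u = u - \sum_{i=0}^{\lceil\alpha\rceil-1}\tfrac{t^i}{i!}u^{(i)}(0)$ (valid for $u\in AC^{\lceil\alpha\rceil}$) in one direction, and the inversion $^CD^\alpha I^\alpha g = g$ for continuous $g$ in the other. The only point deserving a little more care than you give it is the sufficiency direction: to even apply $^CD^\alpha$ to $I^\alpha g$ under the paper's definition one needs $I^\alpha g\in AC^{\lceil\alpha\rceil}[0,T]$ for continuous $g$, which is precisely the regularity claim the paper asserts without proof immediately after the lemma; invoking the relevant theorem from \cite{DieBoo}, as you indicate, closes that gap. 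Your observation that the $i$-th derivative of $I^\alpha g$ vanishes at $0$ because $i\le\lceil\alpha\rceil-1<\alpha$ is also the right justification for recovering the initial data.
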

\begin{proof}
See Lemma 2.8 in \cite{YangCao}.
\end{proof}
We can easily see that if $f$ is  continuous and $ u  $ is the solution of the integral equation (\ref{equivalent_integral_equation})  on $ [0,T]$, then $ u|_{[0,T]} \in AC^{\lceil \alpha \rceil}[0,T] $.
Thus, in order to obtain the solution of the initial value problem (\ref{governing_equation})-(\ref{initial_condition}), we will find the solution of the integral equation (\ref{equivalent_integral_equation}) in $ Y $, where
$ Y $ means the Banach space consisting of all continuous functions belonging in $C[0,T]$ with the Chebyshev norm $ \|\cdot\|_{C[0,T]} $.
Then the integral equation (\ref{equivalent_integral_equation}) is transformed into the fixed point problem with the operator $ J:Y \rightarrow Y $ defined by
\begin{equation}\label{fixed point problem}
Ju(t)=\sum^{\lceil \alpha \rceil-1}_{i=0}\frac{t^i}{i!} \phi^{(i)}(0)+\frac{1}{\Gamma (\alpha) }\int_0^t (t-s)^{\alpha-1}f(s,u_s)ds.
\end{equation}

\begin{lemma}\label{operator_continuity}
Let $B \subset Y $ be a bounded set. If (H2-1) holds, then the operator $ J $ is continuous in $ B $.
\end{lemma}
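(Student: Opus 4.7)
The plan is to prove sequential continuity of $J$ at an arbitrary $u\in B$. Fix $u\in B$ and a sequence $(u_n)\subset B$ with $u_n\to u$ in $Y=C[0,T]$. I would extend every $u_n$ and $u$ to $[-h,T]$ by gluing them with the common initial data $\phi$ on $[-h,0]$; this preserves continuity and gives $\|u_n-u\|_{C[-h,T]}\to 0$. Consequently
\begin{equation}
\nonumber
\sup_{s\in[0,T]}\|u_n{}_s-u_s\|_{C[-h,0]}=\sup_{s\in[0,T]}\sup_{\theta\in[-h,0]}|u_n(s+\theta)-u(s+\theta)|\;\longrightarrow\;0.
\end{equation}

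The key step — and the main obstacle — is to upgrade pointwise continuity of $f$ to uniform convergence $f(s,u_n{}_s)\to f(s,u_s)$ in $s\in[0,T]$. The difficulty is that $C[-h,0]$ is infinite-dimensional, so the image of $B$ under the segment map is not automatically precompact and $f$ is only pointwise continuous on $[0,T]\times C[-h,0]$. To overcome this, I would introduce
\begin{equation}
\nonumber
K=\{u_s:s\in[0,T]\}\cup\{u_n{}_s:s\in[0,T],\;n\in\mathbb{N}\}\subset C[-h,0],
\end{equation}
and verify that $K$ is relatively compact by a direct sequential argument: any sequence drawn from $K$ either admits a subsequence from the continuous image $\{u_s:s\in[0,T]\}$ (compact because $s\mapsto u_s$ is continuous on the compact $[0,T]$), or contains $u_{n_k}{}_{s_k}$ with $n_k\to\infty$; choosing $s_k\to s_*$ and using the triangle inequality
\begin{equation}
\nonumber
\|u_{n_k}{}_{s_k}-u_{s_*}\|_{C[-h,0]}\le \|u_{n_k}{}_{s_k}-u_{s_k}\|_{C[-h,0]}+\|u_{s_k}-u_{s_*}\|_{C[-h,0]}
\end{equation}
yields convergence to $u_{s_*}\in K$. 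Then $f$ is uniformly continuous on the compact product $[0,T]\times\overline{K}$, and combined with the uniform smallness of $\|u_n{}_s-u_s\|_{C[-h,0]}$ established above, this gives $\sup_{s\in[0,T]}|f(s,u_n{}_s)-f(s,u_s)|\to 0$.

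With uniform convergence of the integrands in hand, the conclusion is a short estimate:
\begin{equation}
\nonumber
|Ju_n(t)-Ju(t)|\le \frac{1}{\Gamma(\alpha)}\int_0^t(t-s)^{\alpha-1}\bigl|f(s,u_n{}_s)-f(s,u_s)\bigr|\,ds\le \frac{T^\alpha}{\Gamma(\alpha+1)}\sup_{s\in[0,T]}|f(s,u_n{}_s)-f(s,u_s)|,
\end{equation}
which is independent of $t$ and tends to $0$. Taking the supremum over $t\in[0,T]$ yields $\|Ju_n-Ju\|_{C[0,T]}\to 0$, so $J$ is continuous on $B$. Boundedness of $B$ itself plays no essential role in this argument; it merely ensures that all the auxiliary compact sets constructed along the way sit in a common bounded ball, matching the lemma's hypotheses.
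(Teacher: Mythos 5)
Your proposal is correct, and it is in fact more complete than what the paper offers: the paper does not prove this lemma at all but simply defers to Theorem 3.1 of Yang and Cao, so there is no internal argument to compare against. The substantive content of your write-up is exactly the point that such citations usually gloss over: since $C[-h,0]$ is infinite-dimensional, continuity of $f$ on $[0,T]\times C[-h,0]$ does not by itself give boundedness or uniform continuity on $[0,T]\times B'$ for a bounded $B'$, so one must manufacture a compact set of segments. Your set $K$ and the triangle-inequality argument do this correctly, and the final estimate $\|Ju_n-Ju\|_{C[0,T]}\le \frac{T^\alpha}{\Gamma(\alpha+1)}\sup_{s}|f(s,u_{n\,s})-f(s,u_s)|$ is right. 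Two small points to tighten. First, the dichotomy in your compactness argument omits the case of a sequence $u_{n_k\,s_k}$ with $(n_k)$ bounded but non-constant; this is handled by noting that $(n_k)$ then takes some value infinitely often and each set $\{u_{n\,s}:s\in[0,T]\}$ is itself compact (or, more economically, replace the compactness of $K$ by a direct contradiction argument: if $\sup_s|f(s,u_{n\,s})-f(s,u_s)|\not\to 0$, extract $s_n\to s_*$ with $u_{n\,s_n}\to u_{s_*}$ and $u_{s_n}\to u_{s_*}$ and contradict continuity of $f$ at $(s_*,u_{s_*})$). Second, the gluing with $\phi$ produces an element of $C[-h,T]$ only if $u(0)=u_n(0)=\phi(0)$; this restriction is implicit in the paper's definition of $J$ (otherwise $u_s\notin C[-h,0]$ and $f(s,u_s)$ is undefined for $s<h$), so you should state that $B$ lies in that affine subspace rather than claim the gluing "preserves continuity" unconditionally. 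Neither point affects the validity of the approach.
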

\begin{proof}
See Theorem 3.1 in \cite{YangCao}.
\end{proof}

\begin{lemma}\label{equicontinuity}
Let $B \subset Y $ be a bounded set. If (H2-1) holds, then the set $ J(B) $ is equicontinuous.
\end{lemma}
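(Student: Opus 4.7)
The plan is to estimate $|Ju(t_2)-Ju(t_1)|$ for $0\le t_1<t_2\le T$ by a quantity depending only on $|t_2-t_1|$ (and not on $u$) that tends to zero with $|t_2-t_1|$. First I would set up a uniform sup bound on the integrand: since $B$ is bounded in $Y$ and $\phi$ is continuous on $[-h,0]$, the family $\{u_s:u\in B,\;s\in[0,T]\}$ is bounded in $C[-h,0]$; combined with the continuity of $f$ (interpreted in the sense already used in Lemma \ref{operator_continuity}, i.e.\ so that $f$ is bounded on bounded subsets of its domain), this yields a constant $M^{*}>0$ with $|f(s,u_s)|\le M^{*}$ for every $u\in B$ and $s\in[0,T]$.

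Next I would split the increment in the standard way for fractional integrals. The polynomial part $\sum_{i=0}^{\lceil\alpha\rceil-1}\tfrac{t^{i}}{i!}\phi^{(i)}(0)$ is independent of $u$ and uniformly continuous on $[0,T]$, so it contributes an equicontinuous modulus automatically. The fractional part can be written as
\begin{equation*}
\int_{0}^{t_1}\!\bigl[(t_2-s)^{\alpha-1}-(t_1-s)^{\alpha-1}\bigr]f(s,u_s)\,ds+\int_{t_1}^{t_2}\!(t_2-s)^{\alpha-1}f(s,u_s)\,ds.
\end{equation*}
Passing the absolute value inside and applying $|f|\le M^{*}$, the second summand is immediately at most $M^{*}(t_2-t_1)^{\alpha}/\alpha$. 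For the first, the difference $(t_2-s)^{\alpha-1}-(t_1-s)^{\alpha-1}$ keeps a constant sign on $[0,t_1]$ (nonnegative for $\alpha\ge 1$, nonpositive for $0<\alpha<1$), so $\int_{0}^{t_1}|(t_2-s)^{\alpha-1}-(t_1-s)^{\alpha-1}|\,ds$ evaluates explicitly to $\bigl|t_2^{\alpha}-t_1^{\alpha}-(t_2-t_1)^{\alpha}\bigr|/\alpha$. Combining the two, one obtains a bound on $|Ju(t_2)-Ju(t_1)|$ of the form $\omega_{\phi}(t_1,t_2)+\frac{M^{*}}{\Gamma(\alpha+1)}\bigl[\,|t_2^{\alpha}-t_1^{\alpha}|+2(t_2-t_1)^{\alpha}\bigr]$, whose right-hand side depends only on $t_1,t_2$ and vanishes as $|t_2-t_1|\to 0$ by the uniform continuity of $t\mapsto t^{i}$ and $t\mapsto t^{\alpha}$ on $[0,T]$.

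The hardest step conceptually is the uniform bound $M^{*}$: abstract continuity on $[0,T]\times C[-h,0]$ does not by itself force boundedness on bounded subsets in an infinite-dimensional setting. This is, however, exactly the input that makes Lemma \ref{operator_continuity} work, so it is implicitly available in the present framework and can be invoked here without further argument. Once $M^{*}$ is in hand, the rest is the routine splitting of the Riemann--Liouville kernel and the classical elementary estimate for $(t-s)^{\alpha-1}$, which does not discriminate between the cases $0<\alpha<1$, $\alpha=1$, and $\alpha>1$.
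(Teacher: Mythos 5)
Your proposal is correct and follows exactly the argument that the paper itself does not spell out but delegates to Theorem 3.1 of the cited reference [YangCao]: bound $f(s,u_s)$ uniformly by some $M^{*}$ over the bounded set, split $Ju(t_2)-Ju(t_1)$ into the kernel-difference integral over $[0,t_1]$ plus the tail integral over $[t_1,t_2]$, and evaluate both explicitly to get a modulus of continuity depending only on $t_1,t_2$. This is the canonical route and there is nothing different to compare.

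The one point worth dwelling on is the caveat you raise yourself: the uniform bound $M^{*}$ does \emph{not} follow from (H2-1) alone. The set $\{u_s : u\in B,\ s\in[0,T]\}$ is bounded in $C[-h,0]$ but not relatively compact, and a continuous function on $[0,T]\times C[-h,0]$ need not be bounded on bounded sets in this infinite-dimensional setting. So, strictly read, the lemma as stated (and its counterpart in the cited source) tacitly assumes that $f$ maps bounded sets to bounded sets. You are right that this is the same input Lemma \ref{operator_continuity} needs, and in the only place the lemma is actually used — the proof of Theorem \ref{existence_theorem}, where (H3-1) supplies an explicit growth bound on $|f(t,u)|$ in terms of $\|u\|_{C[-h,0]}$ — the required bound (in an integrable, if not pointwise, sense) is available, so the argument is sound where it is applied. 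Flagging this rather than silently assuming it is the honest way to present the proof; if you wanted to make the lemma self-contained you would add the hypothesis that $f$ is bounded on $[0,T]\times B'$ for bounded $B'\subset C[-h,0]$, or work with the $L^{1/q}$ majorants from (H3-1) directly (which would also require a H\"older-type estimate for the tail integral rather than the crude $M^{*}(t_2-t_1)^{\alpha}/\alpha$ bound).
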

\begin{proof}
See Theorem 3.1 in \cite{YangCao}.
\end{proof}

\begin{lemma}\label{fixed point}
Let $B \subset Y $ be a bounded and convex set. If (H2-1) holds and $J(B)\subset B$, then $ J $ has a fixed point in $B$.
\end{lemma}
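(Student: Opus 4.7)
The plan is to invoke the Schauder fixed point theorem, whose hypotheses are almost entirely handed to us by the three preceding lemmas. Schauder requires a nonempty closed bounded convex subset of a Banach space together with a continuous self-map whose image is relatively compact. Here $Y=C[0,T]$ is the ambient Banach space, $J$ is the candidate self-map, and $B$ is bounded, convex, with $J(B)\subset B$.

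To supply the missing closedness I would pass from $B$ to its closure $\overline{B}\subset Y$, which remains bounded (closure of a bounded set) and convex (closure of a convex set in a normed space). By Lemma \ref{operator_continuity}, $J$ is continuous on the bounded set $\overline{B}$, and the inclusion $J(B)\subset B$ combined with this continuity gives $J(\overline{B})\subset\overline{J(B)}\subset\overline{B}$, so $J$ restricts to a continuous self-map of the closed bounded convex set $\overline{B}$. To verify the compactness requirement, I would apply the Arzel\`a--Ascoli theorem to $J(\overline{B})\subset C[0,T]$: uniform boundedness is inherited from $J(\overline{B})\subset\overline{B}$, and equicontinuity follows from Lemma \ref{equicontinuity} applied to the bounded set $\overline{B}$. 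Hence $J(\overline{B})$ is relatively compact, and Schauder produces a fixed point $u^{\ast}\in\overline{B}$.

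I do not anticipate a genuine obstacle; the lemma is a packaging result that combines continuity, equicontinuity, and the self-mapping property into the single existence statement. The only bookkeeping issue is that the conclusion places the fixed point in $B$ whereas the argument naturally produces it in $\overline{B}$: in the intended applications $B$ will be a closed ball in $Y$ and the distinction vanishes, while in general one should read the hypothesis as implicitly including closedness, or equivalently observe that $u^{\ast}=Ju^{\ast}\in J(\overline{B})\subset\overline{B}$ is the best one can say without extra assumptions on $B$.
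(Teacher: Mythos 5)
Your proof is correct and takes essentially the same route as the paper: Lemmas \ref{operator_continuity} and \ref{equicontinuity} together with the Arzel\`a--Ascoli theorem show $J$ is a compact self-map, and the Schauder fixed point theorem then yields the fixed point. Your extra step of passing to $\overline{B}$ addresses a point the paper silently glosses over (Schauder needs a closed set, and the lemma's hypotheses do not state that $B$ is closed); in the paper's only application the set $G$ is defined by non-strict inequalities and hence is closed, so the discrepancy is harmless there, but your bookkeeping is the more careful reading.
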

\begin{proof}
By Lemma \ref{operator_continuity}, Lemma \ref{equicontinuity} and  Arzela-Ascoli theorem, the operator $ J:B \rightarrow B $ is compact.
Then by Schauder fixed point theorem, $ J$ has at least one fixed point in $B$.
\end{proof}

The Mittag-Leffler functions play a crucial role in our investigation.

\begin{definition}[\cite{DieBoo}]
 Let $ c,d>0 $. A two-parameter function of the Mittag-Leffler type is defined by the series expansion
 \begin{equation}\label{eq2-1}
 E_{c,d}(t)={\sum^\infty_{i=0}{\frac{t^i}{\Gamma (ci+d)}}}.
 \end{equation}
 \end{definition}

 \begin{lemma}[\cite{Sin}]
 \label{Mittag_function_property}
Let $ c, l, r, \beta>0$. If $d<min\{\beta,1\}$, then there exists a real number $ \lambda>0 $ such that
\begin{equation}\label{eq2-6}
\frac{1}{\Gamma (\beta)}\int^{t}_{0}{(t-s)}^{\beta-1}{\frac {E_{c,1-d}({\lambda s^c})}{s^d}ds }<rE_{c,1-d}({\lambda t^c}), t \in [0,l].
\end{equation}
\end{lemma}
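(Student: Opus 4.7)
The plan is to compute the integral on the left exactly using the series representation of $E_{c,1-d}$, thereby reducing (\ref{eq2-6}) to a pointwise comparison between $t^{\beta-d}E_{c,\beta-d+1}(\lambda t^c)$ and $rE_{c,1-d}(\lambda t^c)$, and then to exploit the decay of the resulting ratio as $\lambda\to\infty$.

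First I would substitute the series $E_{c,1-d}(\lambda s^c)=\sum_{i\ge 0}\lambda^i s^{ci}/\Gamma(ci+1-d)$ into the integrand and integrate term by term. Because $d<1$ and $d<\beta$, each exponent $ci-d$ exceeds $-1$ and each $ci+\beta-d$ is strictly positive, so every term is a convergent Beta integral
\begin{equation}
\nonumber
\frac{1}{\Gamma(\beta)}\int_0^t(t-s)^{\beta-1}s^{ci-d}\,ds=\frac{\Gamma(ci+1-d)}{\Gamma(ci+\beta-d+1)}\,t^{ci+\beta-d}.
\end{equation}
After cancellation the series collapses to $t^{\beta-d}E_{c,\beta-d+1}(\lambda t^c)$, so (\ref{eq2-6}) becomes equivalent to finding $\lambda>0$ with
\begin{equation}
\nonumber
\phi(t):=\frac{t^{\beta-d}E_{c,\beta-d+1}(\lambda t^c)}{E_{c,1-d}(\lambda t^c)}<r\quad\text{for every }t\in[0,l].
\end{equation}
Since $\beta>d$, $\phi$ extends continuously to $[0,l]$ with $\phi(0)=0$, so it suffices to show that $\sup_{t\in[0,l]}\phi(t)\to 0$ as $\lambda\to\infty$.

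Next I would split $[0,l]$ at $t^\star:=(M/\lambda)^{1/c}$ for a large constant $M>0$. On $[0,t^\star]$ the argument $\lambda t^c$ lies in $[0,M]$, so combining $E_{c,\beta-d+1}(\lambda t^c)\le E_{c,\beta-d+1}(M)$ with $E_{c,1-d}(\lambda t^c)\ge 1/\Gamma(1-d)$ gives $\phi(t)\le \Gamma(1-d)E_{c,\beta-d+1}(M)(M/\lambda)^{(\beta-d)/c}$, which tends to $0$ as $\lambda\to\infty$ because $\beta>d$. On $[t^\star,l]$ I would use the decay $\psi(z):=E_{c,\beta-d+1}(z)/E_{c,1-d}(z)\to 0$ as $z\to+\infty$: given $\varepsilon>0$, enlarging $M$ forces $\psi(\lambda t^c)\le\varepsilon$ throughout $[t^\star,l]$ and hence $\phi(t)\le l^{\beta-d}\varepsilon$. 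Choosing $\varepsilon<r/(2l^{\beta-d})$ first, then $M$ accordingly, and finally $\lambda$ large enough to push the $[0,t^\star]$ bound below $r/2$, yields the required $\lambda$.

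The hard part will be rigorously establishing $\psi(z)\to 0$ on the positive real axis for every $c>0$. The cleanest route is the classical Wright-type asymptotic $E_{c,e}(z)\sim c^{-1}z^{(1-e)/c}\exp(z^{1/c})$ as $z\to+\infty$, whose dominant exponential factor cancels in the ratio and leaves $\psi(z)\sim z^{-\beta/c}$. For $0<c<2$ this is the standard single-saddle expansion; for $c\ge 2$ one verifies that the secondary contributions $z^{1/c}e^{2\pi in/c}$ with $n\ne 0$ have strictly smaller real part, so the $n=0$ term remains dominant and the same ratio estimate holds. Note that only qualitative convergence $\psi(z)\to 0$ is used in the splitting argument above, so the delicate quantitative expansion is not essential.
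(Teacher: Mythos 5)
The paper does not actually prove this lemma---it is imported verbatim from \cite{Sin} with the proof deferred to that reference---so there is no internal argument to compare against; judged on its own terms, your proof is correct. The exact evaluation $\frac{1}{\Gamma(\beta)}\int_0^t(t-s)^{\beta-1}s^{-d}E_{c,1-d}(\lambda s^c)\,ds=t^{\beta-d}E_{c,\beta-d+1}(\lambda t^c)$ is legitimate (all terms are nonnegative, so Tonelli justifies termwise integration, and $d<\min\{\beta,1\}$ makes every Beta integral converge), the lower bound $E_{c,1-d}(\lambda t^c)\geq 1/\Gamma(1-d)$ is valid, and the two-regime splitting at $t^\star=(M/\lambda)^{1/c}$ is sound. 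The only place you lean on heavy machinery is the claim that $\psi(z)=E_{c,\beta-d+1}(z)/E_{c,1-d}(z)\to 0$ as $z\to+\infty$, for which you invoke the exponential asymptotics of $E_{c,e}$ together with a multi-saddle discussion for $c\geq 2$; that route works, but it can be replaced by a purely elementary coefficient comparison. The ratio of the $i$-th coefficients is $\Gamma(ci+1-d)/\Gamma(ci+\beta-d+1)\sim(ci)^{-\beta}\to 0$, so given $\varepsilon>0$ choose $N$ with $\Gamma(ci+1-d)/\Gamma(ci+\beta-d+1)\leq\varepsilon$ for $i\geq N$; then $E_{c,\beta-d+1}(z)\leq\sum_{i<N}z^i/\Gamma(ci+\beta-d+1)+\varepsilon E_{c,1-d}(z)$, and since $E_{c,1-d}(z)\geq z^j/\Gamma(cj+1-d)$ for arbitrarily large $j$ it dominates the polynomial head, giving $\limsup_{z\to\infty}\psi(z)\leq\varepsilon$. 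With that substitution your argument is complete, self-contained, and elementary.
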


\section{Existence of global solutions}\label{Sec:3}
In this section the existence of global solutions of the initial value problem for the fractional functional differential equation (\ref{governing_equation})-(\ref{initial_condition}) is investigated.
For the next theorem, we make the following assumption:\\
(H3-1) there exist $T_1 \in (0,T]$, $ \eta \in (T_1,T) $, $a_1,a_2,q_1,q_2 \in [0,\alpha)$, $ p_1, p_2 \in (0,1] $, $ b_1, b_2>0 $, $ m_1(t) \in L^{\frac{1}{q_1}} [0,T_1] $,
$ m_2(t) \in L^{\frac{1}{q_2}} [T_1,T] $ such that
\begin{equation}
\nonumber
 |f(t,u)| \leq \left\{
                \begin{aligned}
                  & \frac{b_1}{t^{a_1}}\|u\|_{C[-h,0]}^{p_1}+m_1(t) && \text{for $ t \in (0, T_1] $ and $ u \in C[-h,0] $} \\
                  & \frac{b_2}{|t-\eta|^{a_2}}\|u\|_{C[-h,0]}^{p_2}+ m_2(t) && \text{for $ t \in [T_1,\eta) \cup(\eta, T] $ and $ u \in C[-h,0] $}.\\
                \end{aligned}
              \right.
\end{equation}

\begin{theorem}
\label{existence_theorem}
Suppose that (H2-1) and (H3-1) hold. Then the initial value problem (\ref{governing_equation})-(\ref{initial_condition}) has a  solution  on $ [0,T]$.
\end{theorem}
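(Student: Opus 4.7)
The plan is to apply Lemma~\ref{fixed point} to the operator $J$ defined in (\ref{fixed point problem}). By Lemma 1, solutions of (\ref{governing_equation})--(\ref{initial_condition}) on $[0,T]$ correspond exactly to fixed points of $J$ in $Y$, so it suffices to exhibit a bounded convex subset $B\subset Y$ with $J(B)\subset B$.

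I would take $B$ of the form $\{u\in Y : |u(t)|\le \Phi(t) \text{ on } [0,T]\}$, with $\Phi$ a non-decreasing continuous majorant tailored to the two singularities in (H3-1). On $[0,T_1]$ I set $\Phi(t)=R_1 E_{\alpha,1-a_1}(\lambda t^{\alpha})$, and on $[T_1,T]$ I extend $\Phi$ using a translated Mittag-Leffler expression anchored at $\eta$ (plus a constant so the extension is continuous and non-decreasing at $T_1$). The constants $R_1,R_2,\lambda$ will be pinned down by the estimates below. The set $B$ is convex, and bounded in $Y$ because $\Phi$ is continuous on the compact interval $[0,T]$.

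To verify $J(B)\subset B$, estimate $|Ju(t)|$ piecewise on $[0,T_1]$, $[T_1,\eta)$, and $(\eta,T]$. The polynomial part of $Ju$ is uniformly bounded by a constant $P$ depending only on $\phi$ and $T$. The $m_i$-contributions are bounded by constants via H\"{o}lder's inequality: since $m_i\in L^{1/q_i}$ and $q_i<\alpha$, the kernel $(t-\cdot)^{\alpha-1}$ lies in the conjugate Lebesgue space on the relevant subinterval. Using that $\Phi$ is non-decreasing, $\|u_s\|_{C[-h,0]}\le \max(\|\phi\|_{C[-h,0]},\Phi(s))$, and since $p_i\in(0,1]$ the bound $\|u_s\|^{p_i}\le 1+\|\phi\|_{C[-h,0]}+\Phi(s)$ reduces the singular contribution to a linear integral of the form
\[
\frac{b_i}{\Gamma(\alpha)}\int (t-s)^{\alpha-1}\frac{\Phi(s)}{|s-\eta_i|^{a_i}}\,ds,
\]
with $\eta_1=0$ and $\eta_2=\eta$. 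Lemma~\ref{Mittag_function_property}, applied with $c=\beta=\alpha$ and $d=a_i$, supplies for any prescribed $r>0$ a parameter $\lambda$ for which this integral is dominated by $b_i r\,\Phi(t)$ (after matching the Mittag-Leffler factor into $\Phi$). Choosing $r$ with $b_i r<1/2$ and then $R_1,R_2$ large enough to absorb $P$ together with the $m_i$-contributions yields $|Ju(t)|\le \Phi(t)$ throughout $[0,T]$, and Lemma~\ref{fixed point} then produces the fixed point.

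The principal obstacle is integrating across the interior singularity at $\eta$ when $t>\eta$. Lemma~\ref{Mittag_function_property} as stated treats a singularity at the origin only, so I plan to split $\int_{T_1}^{t}$ at $\eta$ and apply the affine substitutions $s\mapsto \eta-s$ on $[T_1,\eta]$ and $s\mapsto s-\eta$ on $[\eta,t]$, invoking the lemma on each half. This is exactly why $\Phi$ on $[T_1,T]$ must dominate a Mittag-Leffler function of $|t-\eta|$, motivating the shape of the extension above; once this piecewise estimate is controlled, the remaining bounds combine to close the Schauder argument.
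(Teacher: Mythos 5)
Your overall strategy coincides with the paper's: reduce to the fixed point problem for $J$, build a bounded convex invariant set out of Mittag--Leffler majorants (one anchored at $0$, one anchored at $\eta$), control the $m_i$-terms by H\"older's inequality, control the singular terms by Lemma~\ref{Mittag_function_property}, and close with Lemma~\ref{fixed point}. The linearization $\|u_s\|^{p_i}\le 1+\|\phi\|_{C[-h,0]}+\Phi(s)$ and the use of a non-decreasing majorant to absorb the delay are also in the spirit of the paper's choice of the constants $D_1,D_2,D_3$.

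There is, however, a genuine gap at the one step that is the real technical content of the theorem: the singularity at the interior point $\eta$ for $s\in[T_1,\eta)$. Your plan is to substitute $s\mapsto\eta-s$ and ``invoke the lemma on each half,'' but after this substitution the integral becomes
\[
\int_{\eta-t}^{\eta-T_1}\bigl(\sigma-(\eta-t)\bigr)^{\alpha-1}\,\sigma^{-a_2}\,g(\eta-\sigma)\,d\sigma ,
\]
a \emph{right-sided} integral in which the kernel is singular at the \emph{lower} (moving) endpoint and the weight $\sigma^{-a_2}$ is anchored at $\sigma=0$, which is not an endpoint of the integration range. Lemma~\ref{Mittag_function_property} only covers the left-sided form $\int_0^t(t-s)^{\beta-1}E_{c,1-d}(\lambda s^c)s^{-d}\,ds$ with the weight singular at the fixed lower endpoint $0$ and the kernel singular at the upper endpoint $t$; it does not apply to the substituted integral, so this half of your estimate does not close. (The substitution $s\mapsto s-\eta$ on $[\eta,t]$ is fine and does produce the lemma's form.) The paper avoids the issue with an elementary observation you are missing: for $T_1\le s\le t\le\eta$ one has $\eta-s\ge t-s$, hence $(\eta-s)^{-a_2}\le(t-s)^{-a_2}$, so the singular weight merges into the kernel, $(t-s)^{\alpha-1}(\eta-s)^{-a_2}\le(t-s)^{\alpha-a_2-1}$, and the term is dominated by $\Gamma(\alpha-a_2)\,I^{\alpha-a_2}$ applied to the majorant --- i.e.\ Lemma~\ref{Mittag_function_property} with $d=0$ and $\beta=\alpha-a_2>0$. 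For $t>\eta$ the contribution of $\int_0^{\eta}$ is then absorbed into the constant $2D_2E_{1,1}(\lambda_2\eta)$ already established at $t=\eta$. You would need to incorporate this (or an equivalent device) to make your piecewise estimate go through.
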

\begin{proof}
By Lemma \ref{Mittag_function_property},  there exist $ \lambda_1,\lambda_2,\lambda_3>0 $ such that
for $ t \in [0,T] $,
\begin{align}
\nonumber
&\{I^\alpha[E_{1,1-a_1}(\lambda_1 s)s^{-a_1}]\}(t)<\frac{1}{2b_1}E_{1,1-a_1}(\lambda_1 t),\\
\nonumber
&\{I^{\alpha-a_2}[E_{1,1}(\lambda_2 s)]\}(t)<\frac{\Gamma(\alpha)}{2b_2\Gamma(\alpha-a_2)}E_{1,1}(\lambda_2 t),\\
\nonumber
&\{I^\alpha[E_{1,1-a_2}(\lambda_3 s)s^{-a_2}]\}(t)<\frac{1}{2b_2}E_{1,1-a_2}(\lambda_3 t).
\end{align}
We define a convex bounded closed subset $ G $ of $ Y $ as follows\\
\begin{equation}
\nonumber
G=\Biggl \{ x \in C[0,T] :|x(t)| \leq \left\{
                            \begin{aligned}
                                & 2D_1E_{1,1-a_1}(\lambda_1 t) && \text{for $ t \in [0, T_1] $} \\
                                & 2D_2E_{1,1}(\lambda_2 t) && \text{for $ t \in [T_1, \eta] $}\\
                                & 2D_3E_{1,1-a_2}(\lambda_3 (t-\eta)) && \text{for $ t \in [\eta, T] $} \\
                            \end{aligned}
                            \right.
\Biggr \},
\end{equation}
where
\begin{align}
\nonumber
&D_1= \Gamma (1-a_1) max \Bigg\{1,\|\phi\|_{C[-h,0]}, \sum^{\lceil \alpha \rceil-1}_{i=0}\frac{T^i}{i!} |\phi^{(i)}(0)|
+\frac{{T_1}^{\alpha-q_1} }{\Gamma(\alpha)}\bigg(\frac{1-q_1}{\alpha-q_1}\bigg)^{1-q_1}\\
\nonumber
&\|m_1\|_{L^{\frac{1}{q_1}}[0,T_1]}\Bigg\},\\
\nonumber
&D_2= 2D_1E_{1,1-a_1}(\lambda_1 {T_1})+\frac{1}{\Gamma(\alpha)}\bigg(\frac{1-q_2}{\alpha-q_2}\bigg)^{1-q_2}
{(\eta-T_1)}^{\alpha-q_2} \|m_2\|_{L^{\frac{1}{q_2}}[T_1,\eta]},\\
\nonumber
&D_3=\Gamma (1-a_2)\Bigg[2D_2E_{1,1}(\lambda_2 \eta) +\frac{1}{\Gamma(\alpha)}\bigg(\frac{1-q_2}{\alpha-q_2}\bigg)^{1-q_2}
(T-\eta)^{\alpha-q_2} \|m_2\|_{L^{\frac{1}{q_2}}[\eta,T]}\Bigg].
\end{align}
By Holder inequality, we have that for any $ x \in G $ and $ t \in [0,T_1] $,
\begin{align}
\nonumber
&|Jx(t)|\leq \sum^{\lceil \alpha \rceil-1}_{i=0}\frac{t^i}{i!} |\phi^{(i)}(0)|+\frac{1}{\Gamma(\alpha)}\int_0^t (t-s)^{\alpha-1}|f(s,x_s)|ds\\
\nonumber
&\leq\sum^{\lceil \alpha \rceil-1}_{i=0}\frac{t^i}{i!} |\phi^{(i)}(0)|+\frac{1}{\Gamma(\alpha)}\int_0^t (t-s)^{\alpha-1} \bigg(\frac{b_1}{s^{a_1}}\|x_s\|_{C[-h,0]}^{p_1}+m_1(s)\bigg)ds\\
\nonumber
&\leq \sum^{\lceil \alpha \rceil-1}_{i=0}\frac{t^i}{i!} |\phi^{(i)}(0)|+\frac{2b_1 D_1}{\Gamma(\alpha)}\int_0^t (t-s)^{\alpha-1} \frac{E_{1,1-a_1}(\lambda_1 s)}{s^{a_1}}ds\\
\nonumber
&+\frac{1}{\Gamma(\alpha)}\bigg(\int_0^t (t-s)^{\frac{\alpha-1}{1-q_1}}ds\bigg)^{1-q_1}\|m_1\|_{L^{\frac{1}{q_1}}[0,T_1]} \\
\nonumber
&\leq \frac{D_1}{\Gamma(1-a_1)}+D_1E_{1,1-a_1}(\lambda_1 t)< 2D_1E_{1,1-a_1}(\lambda_1 t).
\end{align}
We have that for any $ x \in G $ and $ t \in [T_1,\eta] $,
\begin{align}
\nonumber
&|Jx(t)|\leq \sum^{\lceil \alpha \rceil-1}_{i=0}\frac{t^i}{i!} |\phi^{(i)}(0)|+\int_0^{T_1} \frac{(t-s)^{\alpha-1}}{\Gamma(\alpha)}|f(s,x_s)|ds +\int_{T_1}^t \frac{(t-s)^{\alpha-1}}{\Gamma(\alpha)}|f(s,x_s)|ds
\end{align}
\begin{align}
\nonumber
&\leq 2D_1E_{1,1-a_1}(\lambda_1 {T_1})+\frac{1}{\Gamma(\alpha)}\int_{T_1}^t (t-s)^{\alpha-1}\bigg(\frac{b_2}{(\eta-s)^{a_2}}\|x_s\|^{p_2}+ m_2(s)\bigg)ds\\
\nonumber
&\leq 2D_1E_{1,1-a_1}(\lambda_1 {T_1})+\int_{T_1}^t \frac{(t-s)^{\alpha-1}}{\Gamma(\alpha)}m_2(s)ds+\int_{T_1}^t \frac{b_2(t-s)^{\alpha-a_2-1}}{\Gamma(\alpha)}\|x_s\|_{C[-h,0]}^{p_2}ds\\
\nonumber
&\leq 2D_1E_{1,1-a_1}(\lambda_1 {T_1})+\frac{1}{\Gamma(\alpha)}\bigg(\int_{T_1}^t (t-s)^{\frac{\alpha-1}{1-q_2}}ds \bigg)^{1-q_2}\|m_2\|_{L^{\frac{1}{q_2}}[T_1,\eta]}\\
\nonumber
&+\frac{2b_2\Gamma(\alpha-a_2)}{\Gamma(\alpha)}D_2\{I^{\alpha-a_2}[E_{1,1}(\lambda_2 s)]\}(t)
\leq  D_2+D_2E_{1,1}(\lambda_2 t)\leq 2D_2E_{1,1}(\lambda_2 t).
\end{align}
We have that for any $ x \in G $ and $ t \in [\eta,T] $,
\begin{align}
\nonumber
&|Jx(t)|\leq \sum^{\lceil \alpha \rceil-1}_{i=0}\frac{t^i}{i!} |\phi^{(i)}(0)|+\int_0^{\eta} \frac{(t-s)^{\alpha-1}}{\Gamma(\alpha)}|f(s,x_s)|ds +\int_{\eta}^t \frac{(t-s)^{\alpha-1}}{\Gamma(\alpha)}|f(s,x_s)|ds\\
\nonumber
&\leq 2D_2E_{1,1}(\lambda_2 \eta)+\frac{1}{\Gamma(\alpha)}\int_\eta^t (t-s)^{\alpha-1}\bigg(\frac{b_2}{(s-\eta)^{a_2}}\|x_s\|_{C[-h,0]}^{p_2}
+ m_2(s)\bigg)ds\\
\nonumber
&\leq 2D_2E_{1,1}(\lambda_2 \eta)+\int_\eta^t \frac{(t-s)^{\alpha-1}}{\Gamma(\alpha)}m_2(s)ds+ \int_0^{t-\eta}\frac{(t-\eta-s)^{\alpha-1}}{\Gamma(\alpha)} \frac{b_2}{s^{a_2}}\|x_{s+\eta}\|_{C[-h,0]}^{p_2}ds\\
\nonumber
&\leq 2D_2E_{1,1}(\lambda_2 \eta)+\frac{1}{\Gamma(\alpha)}\bigg(\int_\eta^t (t-s)^{\frac{\alpha-1}{1-q_2}}ds\bigg)^{1-q_2}
\|m_2\|_{L^{\frac{1}{q_2}}[\eta,T]}\\
\nonumber
&+2b_2D_3\{I^\alpha[E_{1,1-a_2}(\lambda_3 s)]\}(t-\eta)\leq \frac{D_3}{\Gamma(1-a_2)}+D_3E_{1,1-a_2}(\lambda_3 (t-\eta))\\
\nonumber
&\leq 2D_3E_{1,1-a_2}(\lambda_3 (t-\eta)).
\end{align}
Thus $ JG \subset G $.
By Lemma \ref{fixed point}, $ J $  has at least one fixed point in $ G $.
\end{proof}

\begin{remark}
The condition (H3-1) of Theorem \ref{existence_theorem} can be replaced by the following condition.
There exist $ n\in N $, $0=T_0<T_1<\cdots<T_n=T $, $ \eta_i \in (T_{i-1},T_i)$ for $i=2,\cdots,n $, $a_j,q_j \in [0,\alpha)$, $ p_j \in (0,1] $, $ b_j>0 $, $ m_j(t) \in L^{\frac{1}{q_j}} [T_{j-1},T_j] $ for $ j=1,\cdots,n $ such that
\begin{equation}
\nonumber
 |f(t,u)| \leq \left\{
                \begin{aligned}
                  & \frac{b_1}{t^{a_1}}\|u\|_{C[-h,0]}^{p_1}+m_1(t) && \text{for $ t \in (0, T_1] $ and $ u \in C[-h,0] $} \\
                  & \frac{b_i}{|t-\eta_i|^{a_i}}\|u\|_{C[-h,0]}^{p_i}+ m_i(t) && \text{for
                   $ t \in [T_{i-1},\eta_i) \cup(\eta_i, T_i] $ and $ u \in C[-h,0] $},\\
                \end{aligned}
              \right.
\end{equation}
where $ i=2,\cdots,n $.
\end{remark}

\begin{remark}
Theorem \ref{existence_theorem} is an improvement on the result of Corollary 3.1 in \cite{YangCao}.
\end{remark}

\begin{remark}
Theorem \ref{existence_theorem} can be easily extended to vector-valued functions.
\end{remark}

\begin{corollary}
\label{global_existence}
Suppose that (H2-1) and (H3-1) hold, except that the number $ T $ is taken to be $ \infty $.
Then the initial value problem (\ref{governing_equation})-(\ref{initial_condition}) has a solution  on $ [0, \infty)$.
\end{corollary}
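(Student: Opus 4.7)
My plan is an exhaustion/diagonalization argument: apply Theorem \ref{existence_theorem} on each finite subinterval $[0, N]$ for $N \in \mathbb{N}$ large, and then extract a subsequence of the resulting solutions that converges uniformly on every compact subset of $[0, \infty)$. Under the natural reading of the $T = \infty$ version of (H3-1), namely that $m_2 \in L^{1/q_2}_{\mathrm{loc}}[T_1, \infty)$, the assumption with $T$ replaced by any integer $N > \eta$ is satisfied, so Theorem \ref{existence_theorem} produces a solution $u_N : [-h, N] \to \mathbb{R}$ of (\ref{governing_equation})--(\ref{initial_condition}) on $[0, N]$.

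\textbf{A priori bound and equicontinuity on compacta.}
Fix $M > \eta$. I would re-run the Mittag--Leffler-type estimates from the proof of Theorem \ref{existence_theorem}, but with $T$ replaced by $M$, to show that every $u_N$ with $N \geq M$ satisfies $|u_N(t)| \leq \Psi_M(t)$ on $[0, M]$, where $\Psi_M$ is the piecewise bound coming from the $G$-set construction with endpoint $M$. This works because those estimates use only the integral equation (\ref{equivalent_integral_equation}) and the size condition (H3-1) on $[0, M]$; they do not use the fixed-point property of $u_N$ on the larger interval. Hence $\Psi_M$ depends on $M$, $T_1$, $\eta$, the constants in (H3-1), and $m_1, m_2|_{[T_1, M]}$, but not on $N$. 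Combining this uniform bound with Lemma \ref{equicontinuity}, applied to the operator on $[0, M]$ (using $u_N = J u_N$ there), yields equicontinuity of the family $\{u_N|_{[0, M]}\}_{N \geq M}$.

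\textbf{Diagonalization and passage to the limit.}
By Arzel\`{a}--Ascoli and a standard Cantor diagonal argument over $M \in \mathbb{N}$, some subsequence $(u_{N_k})$ converges uniformly on each $[0, M]$ to a continuous function $u : [0, \infty) \to \mathbb{R}$, which I extend to $[-h, 0]$ by $\phi$. For fixed $t \geq 0$, $(u_{N_k})_s \to u_s$ in $C[-h, 0]$ uniformly for $s \in [0, t]$, so continuity of $f$ gives $f(s, (u_{N_k})_s) \to f(s, u_s)$ pointwise, while (H3-1) together with $|u_{N_k}(t)|\leq \Psi_M(t)$ supplies an integrable majorant. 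Dominated convergence lets me pass to the limit in (\ref{equivalent_integral_equation}) written for $u_{N_k}$, showing that $u$ satisfies (\ref{equivalent_integral_equation}) on $[0, \infty)$. The equivalence lemma at the start of Section \ref{Sec:2} then shows $u$ is a solution of (\ref{governing_equation})--(\ref{initial_condition}) on $[0, \infty)$.

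\textbf{Main obstacle.}
The delicate step is the $N$-independent a priori bound on $[0, M]$: the constants $D_1, D_2, D_3$ in the $G$-set of Theorem \ref{existence_theorem} depend on the global endpoint and grow with $N$, so one cannot simply quote the bound from the application on $[0, N]$. What saves the argument is that the Mittag--Leffler bounds produced via Lemma \ref{Mittag_function_property} are purely local, so redoing the estimates with $M$ in place of $T$ gives a bound on $u_N|_{[0, M]}$ that depends on $M$ but not on $N$.
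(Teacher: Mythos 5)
Your argument is correct in outline and is in fact considerably more careful than the paper's own proof, which consists of two sentences: by Theorem \ref{existence_theorem} the problem has a solution on $[0,T]$ for every $T$, and ``since $T$ can be chosen arbitrarily large'' there is a global solution. Taken literally that is a gap: without uniqueness the solutions on $[0,N]$ for different $N$ need not be restrictions of one another, so one cannot simply let $T\to\infty$ and take a union. Your compactness-plus-diagonalization scheme is exactly the device needed to close that gap, so your proposal supplies more than the paper does, at the cost of length. The one step you should spell out further is the $N$-independent a priori bound on $[0,M]$. The computation in the proof of Theorem \ref{existence_theorem} shows $JG_M\subset G_M$, i.e. that the bound is preserved by one application of $J$; it does not by itself show that an arbitrary fixed point of $J$ on $[0,M]$ lies in $G_M$, and the solution $u_N$ delivered by Theorem \ref{existence_theorem} is only known to lie in $G_N$, whose constants grow with $N$. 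To obtain $|u_N(t)|\leq\Psi_M(t)$ on $[0,M]$ you need a continuation (first-crossing) argument: the bound holds at $t=0$ with strict inequality, and if $t^{*}$ were the first time equality is attained, then feeding the bound on $[0,t^{*}]$ into the integral equation and using the strictness of the Mittag--Leffler estimates (all the displayed chains in the proof of Theorem \ref{existence_theorem} terminate in a strict ``$<$'') yields $|u_N(t^{*})|<\Psi_M(t^{*})$, a contradiction. With that supplement, your uniform bound, the equicontinuity from Lemma \ref{equicontinuity}, Arzel\`{a}--Ascoli with a diagonal subsequence, and dominated convergence in (\ref{equivalent_integral_equation}) all go through as you describe, and the equivalence lemma converts the limiting integral equation back into a solution of (\ref{governing_equation})--(\ref{initial_condition}) on $[0,\infty)$.
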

\begin{proof}
By Theorem \ref{existence_theorem}, for any $ T \in R $,  the fractional functional differential equation (\ref{governing_equation})-(\ref{initial_condition}) has a solution.
Since $ T $ can be chosen arbitrarily large, the equation (\ref{governing_equation})-(\ref{initial_condition}) has at least one global solution on $ [0, \infty)$.
\end{proof}


\section{Uniqueness of global solutions}\label{Sec:4}
In this section the uniqueness of global solutions of the initial value problem for the fractional functional differential equation (\ref{governing_equation})-(\ref{initial_condition}) is studied.
For the uniqueness theorem, we make the following hypotheses:\\
(H4-1) there exist constants $ a_1,a_2 \in [0,\alpha), b_1, b_2>0 $, $ T_1 \in (0,T]$,
$ \eta \in (T_1,T)$ such that
\vskip -11pt
\begin{equation}
\nonumber
|f(t,u)-f(t,v)|\leq \left\{ \begin{aligned}
                             & \frac {b_1}{t^{a_1}}\|u-v\|_{C[-h,0]} && \text {for $t \in (0,T_1]$, $ u,v \in C[-h,0] $}\\
                             & \frac {b_2}{|t-\eta|^{a_2}}\|u-v\|_{C[-h,0]} && \text { for $ t \in [T_1,\eta)\cup (\eta,T]$, $ u,v \in C[-h,0] $}.
                    \end{aligned}
                    \right.
\end{equation}

\begin{theorem}\label{uniqueness_theorem}
Suppose that (H2-1) and (H4-1) hold. Then the  initial value problem  (\ref{governing_equation})-(\ref{initial_condition}) has a unique solution on $ [0,T]$.
\end{theorem}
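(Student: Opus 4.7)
The plan. Existence is already granted by Theorem \ref{existence_theorem}, since (H4-1) together with continuity of $f$ yields (H3-1) via $|f(t,u)|\leq |f(t,u)-f(t,0)|+|f(t,0)|$ and boundedness of the continuous function $|f(\cdot,0)|$ on $[0,T]$. The remaining task is uniqueness. Let $u,v$ be two solutions, set $w(t):=|u(t)-v(t)|$, and note that because both coincide with $\phi$ on $[-h,0]$, subtracting the two integral representations (\ref{equivalent_integral_equation}) cancels the polynomial prefix and leaves $w(t)\leq\frac{1}{\Gamma(\alpha)}\int_0^t(t-s)^{\alpha-1}|f(s,u_s)-f(s,v_s)|\,ds$ for $t\in[0,T]$. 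I propagate $w\equiv 0$ across the three subintervals $[0,T_1]$, $[T_1,\eta]$, $[\eta,T]$ induced by (H4-1).

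\emph{Step 1 (on $[0,T_1]$).} Apply Lemma \ref{Mittag_function_property} with $c=1$, $d=a_1$, $\beta=\alpha$, $r=1/(2b_1)$ to produce $\lambda_1>0$ with the weighted kernel bound $\frac{1}{\Gamma(\alpha)}\int_0^t(t-s)^{\alpha-1}E_{1,1-a_1}(\lambda_1 s)s^{-a_1}\,ds<\frac{1}{2b_1}E_{1,1-a_1}(\lambda_1 t)$ on $[0,T_1]$. Set $M_1:=\sup_{t\in[0,T_1]}w(t)/E_{1,1-a_1}(\lambda_1 t)$, which is finite by continuity of $w$ and the lower bound $E_{1,1-a_1}(0)=1/\Gamma(1-a_1)>0$. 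Because $u=v=\phi$ on $[-h,0]$, monotonicity of $E_{1,1-a_1}$ gives $\|u_s-v_s\|_{C[-h,0]}\leq M_1 E_{1,1-a_1}(\lambda_1 s)$ for $s\in[0,T_1]$; feeding this through the Lipschitz clause of (H4-1) and the kernel bound yields $w(t)\leq\tfrac{1}{2}M_1 E_{1,1-a_1}(\lambda_1 t)$, so $M_1\leq M_1/2$, forcing $M_1=0$ and $u\equiv v$ on $[0,T_1]$.

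\emph{Step 2 (on $[T_1,\eta]$).} Now $w\equiv0$ on $[-h,T_1]$, and the obstacle is that the Lipschitz weight $(\eta-s)^{-a_2}$ from (H4-1) is singular at the \emph{upper} endpoint $s=\eta$, whereas Lemma \ref{Mittag_function_property} is tailored to left-endpoint singularities. I bridge this by compact exhaustion: for any fixed $\eta'\in(T_1,\eta)$, $(\eta-s)^{-a_2}$ is uniformly bounded on $[T_1,\eta']$ by $L:=b_2(\eta-\eta')^{-a_2}$, so the kernel becomes non-singular there. Applying Lemma \ref{Mittag_function_property} with $d=0$ after the translation $\tau=s-T_1$ yields $\lambda>0$ with $\frac{L}{\Gamma(\alpha)}\int_0^{t-T_1}(t-T_1-\tau)^{\alpha-1}E_{1,1}(\lambda\tau)\,d\tau<\tfrac{1}{2}E_{1,1}(\lambda(t-T_1))$, and the same supremum argument with weight $E_{1,1}(\lambda(t-T_1))$ forces $w\equiv0$ on $[T_1,\eta']$. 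Since $\eta'\in(T_1,\eta)$ was arbitrary, $w\equiv0$ on $[T_1,\eta)$, and continuity of $u,v$ at $\eta$ extends this to $w(\eta)=0$.

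\emph{Step 3 (on $[\eta,T]$).} With $w\equiv0$ on $[-h,\eta]$ in hand, the change of variable $\tau=s-\eta$, $t'=t-\eta$ rewrites the bound as $w(\eta+t')\leq\frac{b_2}{\Gamma(\alpha)}\int_0^{t'}(t'-\tau)^{\alpha-1}\tau^{-a_2}\|u_{\eta+\tau}-v_{\eta+\tau}\|_{C[-h,0]}\,d\tau$, which has exactly the structure of Step 1 with $\eta$ playing the role of the origin. Applying Lemma \ref{Mittag_function_property} with $d=a_2$ and repeating the $M$-argument with weight $E_{1,1-a_2}(\lambda_3(t-\eta))$ gives $w\equiv0$ on $[\eta,T]$. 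Combining the three steps yields uniqueness on $[0,T]$. The main obstacle is Step 2, precisely because its right-endpoint singularity is incompatible with the left-endpoint form of Lemma \ref{Mittag_function_property}; the exhaustion-plus-continuity device is what circumvents it.
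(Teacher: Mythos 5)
Your proposal is correct, and its overall skeleton --- propagating uniqueness across the three subintervals $[0,T_1]$, $[T_1,\eta]$, $[\eta,T]$ by means of Mittag--Leffler comparison weights supplied by Lemma \ref{Mittag_function_property} --- is exactly the paper's. Steps 1 and 3 coincide with the paper's argument (the paper phrases the weighted supremum via the quantities $W_i$, $Q_i$ and derives a strict-inequality contradiction at the first point where the ratio is attained, whereas you directly obtain $M\le M/2$; yours is if anything slightly cleaner, since it does not rely on the infimum being attained). The one genuine divergence is Step 2: you treat the right-endpoint singularity $(\eta-s)^{-a_2}$ by compact exhaustion, bounding the Lipschitz weight on $[T_1,\eta']$ for each $\eta'<\eta$, applying the lemma with $d=0$, and then letting $\eta'\to\eta$ with a continuity argument at $\eta$. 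The paper instead absorbs the singularity into the kernel via the pointwise inequality $(\eta-s)^{-a_2}\le (t-s)^{-a_2}$ for $s\le t\le\eta$, which turns the integral into $I^{\alpha-a_2}$ applied to $E_{1,1}(\lambda_2 s)$ and gives a one-shot contraction on all of $[T_1,\eta]$ (this is precisely where the hypothesis $a_2<\alpha$ is used in that step). Both devices work; the paper's is shorter, yours is more elementary and does not need the fractional integral of reduced order $\alpha-a_2$ on the middle interval. A small bonus of your write-up is that you justify the implication (H4-1) $\Rightarrow$ (H3-1) via $|f(t,u)|\le|f(t,u)-f(t,0)|+|f(t,0)|$ and boundedness of $f(\cdot,0)$, which the paper asserts without comment.
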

\begin{proof}
It follows from the condition (H4-1) that (H3-1) holds.
Thus, by Theorem \ref{existence_theorem}, the fractional functional differential equation  (\ref{governing_equation})-(\ref{initial_condition}) has at least one solution on $ [0, T]$.
By using the method of proof by contradiction, we will obtain a uniqueness result for solutions of the initial value problem (\ref{governing_equation})-(\ref{initial_condition}).
Assume that the equation (\ref{governing_equation})-(\ref{initial_condition}) has two solutions on $ [0, T]$.
Then the operator $ J $ has also two fixed points $x,y \in C[0,T]$ such that $\|x-y\|_{C[0,T]}>0$.
By Lemma \ref{Mittag_function_property}, there exists a real number $ \lambda_1>0 $
such that for $ t \in [0,T] $,
\begin{equation}
\nonumber
\{I^\alpha[E_{1,1-a_1}(\lambda_1 s)s^{-a_1}]\}(t)<\frac{1}{b_1}E_{1,1-a_1}(\lambda_1 t).
\end{equation}
We define $W_1$ and $Q_1$ as follows
\begin{align}
\nonumber
W_1&=\inf\{w:|x(t)-y(t)|\leq wE_{1,1-a_1}(\lambda_1t),t\in [0,T_1]\},\\
\nonumber
Q_1&=\inf\{t\in[0,T_1]:|x(t)-y(t)|=W_1E_{1,1-a_1}(\lambda_1t)\}.
\end{align}
If $ W_1 \neq 0 $, then we have
\begin{align}
\nonumber
W_1E_{1,1-a_1}&(\lambda_1{Q_1})=|x(Q_1)-y(Q_1)|\\
\nonumber
&\leq \frac{1}{\Gamma(\alpha)}\int_0^{Q_1} (Q_1-s)^{\alpha-1}|f(s,x_s)-f(s,y_s)|ds\\
\nonumber
&\leq \frac{1}{\Gamma(\alpha)}\int_0^{Q_1} (Q_1-s)^{\alpha-1}\frac{b_1}{s^{a_1}}\|x_s-y_s\|_{C[-h,0]}ds\\
\nonumber
&\leq \frac{b_1}{\Gamma(\alpha)}\int_0^{Q_1} (Q_1-s)^{\alpha-1} \frac{1}{s^{a_1}}W_1E_{1,1-a_1}(\lambda_1s)ds\\
\nonumber
&< W_1E_{1,1-a_1}(\lambda_1{Q_1}),
\end{align}
which implies that $ W_1=0 $. Therefore $ x(t)=y(t), t\in [0,T_1] $.
By Lemma \ref{Mittag_function_property}, there exists a real number $ \lambda_2>0 $
such that for $ t \in [0,T] $,
\begin{equation}
\nonumber
\{I^{\alpha-a_2}[E_{1,1}(\lambda_2 s)]\}(t)<\frac{\Gamma(\alpha)}{b_2\Gamma(\alpha-a_2)}E_{1,1}(\lambda_2 t).
\end{equation}
We define $W_2$ and $Q_2$ as follows
\begin{align}
\nonumber
W_2&=\inf\{w:|x(t)-y(t)|\leq wE_{1,1}(\lambda_2t),t\in [T_1,\eta]\},\\
\nonumber
Q_2&=\inf\{t\in[T_1,\eta]:|x(t)-y(t)|=W_2E_{1,1}(\lambda_2t)\}.
\end{align}
If $ W_2 \neq 0 $, then we have
\begin{align}
\nonumber
W_2&E_{1,1}(\lambda_2{Q_2})=|x(Q_2)-y(Q_2)|\\
\nonumber
&\leq \frac{1}{\Gamma(\alpha)}\int_0^{Q_2} (Q_2-s)^{\alpha-1}|f(s,x_s)-f(s,y_s)|ds\\
\nonumber
&\leq \frac{1}{\Gamma(\alpha)}\int_{T_1}^{Q_2} (Q_2-s)^{\alpha-1}\frac {b_2}{(\eta-s)^{a_2}}\|x_s-y_s\|_{C[-h,0]}ds\\
\nonumber
&\leq \frac{b_2}{\Gamma(\alpha)}\int_0^{Q_2} (Q_2-s)^{\alpha-a_2-1}W_2E_{1,1}(\lambda_2s)ds
<W_2E_{1,1}(\lambda_2{Q_2}),
\end{align}
which implies that $ W_2=0 $. Therefore $ x(t)=y(t), t\in [0,\eta] $.
By Lemma \ref{Mittag_function_property}, there exists a real number $ \lambda_3>0 $
such that for $ t \in [0,T] $,
\begin{equation}
\nonumber
\{I^q[E_{1,1-a_2}(\lambda_3 s)s^{-a_2}]\}(t)<\frac{1}{b_2}E_{1,1-a_2}(\lambda_3 t).
\end{equation}
We define $W_3$ and $Q_3$ as follows
\begin{align}
\nonumber
W_3&=\inf\{w:|x(t)-y(t)|\leq wE_{1,1-a_2}(\lambda_3(t-\eta)),t\in [\eta,T]\},\\
\nonumber
Q_3&=\inf\{t\in[\eta,T]:|x(t)-y(t)|=W_3E_{1,1-a_2}(\lambda_3(t-\eta))\}.
\end{align}
From the assumption $\|x-y\|_{C[0,T]}>0$, it is clear that $ W_3 \neq 0 $.
Then we have
\begin{align}
\nonumber
W_3&E_{1,1-a_2}(\lambda_3 (Q_3-\eta))=|x(Q_3)-y(Q_3)|\\
\nonumber
&\leq  \frac{1}{\Gamma(\alpha)}\int_\eta^{Q_3} (Q_3-s)^{\alpha-1}|f(s,x_s)-f(s,y_s)|ds\\
\nonumber
&\leq \frac{1}{\Gamma(\alpha)}\int_\eta^{Q_3} (Q_3-s)^{\alpha-1}\frac{b_2}{(s-\eta)^{a_2}}\|x_s-y_s\|_{C[-h,0]}ds\\
\nonumber
&\leq \frac{b_2}{\Gamma(\alpha)}\int_0^{Q_3-l} (Q_3-l-s)^{\alpha-1} \frac{1}{s^{a_2}}W_3E_{1,1-a_2}(\lambda_3s)ds\\
\nonumber
&< W_3E_{1,1-a_2}(\lambda_3(Q_3-\eta)).
\end{align}
This contradiction shows that the equation  (\ref{governing_equation})-(\ref{initial_condition}) has a unique solution.
\end{proof}

\begin{remark}
The condition (H4-1) of Theorem \ref{uniqueness_theorem} can be replaced by the following condition.
There exist  $ n\in N $, $0=T_0<T_1<\cdots<T_n=T $, $ \eta_i \in (T_{i-1},T_i)$ for $i=2,\cdots,n $, $a_j \in [0,\alpha)$, $ b_j>0 $ for $ j=1,\cdots,n $ such that
\begin{equation}
\nonumber
 |f(t,u)-f(t,v)| \leq \left\{
                \begin{aligned}
                  & \frac{b_1}{t^{a_1}}\|u-v\|_{C[-h,0]} && \text{for $ t \in (0, T_1] $ and $ u,v \in C[-h,0] $} \\
                  & \frac{b_i}{|t-\eta_i|^{a_i}}\|u-v\|_{C[-h,0]} && \text{for $ t \in [T_{i-1},\eta_i) \cup(\eta_i, T_i] $}\\
                  & && \text{ and $ u,v \in C[-h,0] $},\\
                \end{aligned}
              \right.
\end{equation}
where  $ i=2,\cdots,n $.
\end{remark}

\begin{remark}
Theorem \ref{existence_theorem} is an improvement on the result of Theorem 5.1 in \cite{YangCao}.
\end{remark}

\begin{corollary}
\label{global_uniqueness}
Suppose that (H2-1) and (H4-1) hold, except that the number $ T $ is taken to be $ \infty $.
Then the initial value problem (\ref{governing_equation})-(\ref{initial_condition}) has a unique solution on $ [0, \infty)$.
\end{corollary}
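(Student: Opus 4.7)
The proof will mirror the argument for Corollary \ref{global_existence}, but now we must additionally reconcile the unique solutions constructed on different finite horizons. The plan is to produce a global solution by taking a compatible family $\{u_T\}_{T>0}$ of unique finite-horizon solutions and glueing them, and to derive global uniqueness from the finite-horizon uniqueness.

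First, I would fix an arbitrary $T>0$ and observe that the hypotheses (H2-1) and (H4-1), being assumed for $T=\infty$, restrict to valid hypotheses on $[0,T]$ (for any $T$ large enough that $\eta<T$, which we may always arrange by shrinking $T_1$ and $\eta$ is not needed since the conditions hold for \emph{some} $T_1,\eta$; if the condition is understood pointwise in $t$, simply applying Theorem \ref{uniqueness_theorem} on $[0,T]$ yields a unique solution $u_T:[-h,T]\to\mathbb{R}$ of (\ref{governing_equation})--(\ref{initial_condition}) on $[0,T]$). So existence and uniqueness on every finite interval is in hand.

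Next, the compatibility step. For any $0<S<T$, I would note that the restriction $u_T|_{[-h,S]}$ still satisfies $u_T|_{[-h,0]}=\phi$ and, via the integral reformulation (\ref{equivalent_integral_equation}), solves (\ref{governing_equation})--(\ref{initial_condition}) on $[0,S]$; hence by the uniqueness part of Theorem \ref{uniqueness_theorem} applied on $[0,S]$, $u_T|_{[-h,S]}=u_S$. Consequently, setting
\begin{equation}
\nonumber
u(t):=u_T(t)\quad\text{for any $T\ge t$}
\end{equation}
defines a function $u:[-h,\infty)\to\mathbb{R}$ without ambiguity. On every finite interval $[0,T]$ it equals $u_T$, so it satisfies (\ref{governing_equation})--(\ref{initial_condition}) there, and hence on $[0,\infty)$.

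Finally, for global uniqueness I would argue that any global solution $v$ on $[0,\infty)$ restricts, for each $T>0$, to a solution of the initial value problem on $[0,T]$; by Theorem \ref{uniqueness_theorem} this restriction must equal $u_T=u|_{[-h,T]}$, and since $T$ is arbitrary, $v\equiv u$ on $[-h,\infty)$. I do not anticipate a real obstacle in this corollary; the only care point is being explicit that finite-horizon uniqueness delivers both the well-definedness of the glueing and the uniqueness of the extended solution, so that the argument is not merely a repetition of Corollary \ref{global_existence}.
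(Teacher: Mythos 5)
Your proposal is correct and follows essentially the same route as the paper, which simply says the result is proved "similar to Corollary \ref{global_existence}", i.e.\ by applying the finite-horizon theorem for arbitrarily large $T$. You go further than the paper by explicitly verifying the compatibility of the solutions $u_T$ across different horizons and deducing global uniqueness from finite-horizon uniqueness — details the paper leaves implicit but which are exactly the right ones to supply.
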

\begin{proof}
Similar to Corollary \ref{global_existence}, we can prove this result.
\end{proof}

\section*{Acknowledgement(s)}

The authors would like to thank refrees for their valuable advices for the improvement of this article.

\section*{References}


\begin{thebibliography}{20}

\bibitem{Uchaikin}
Uchaikin VV,
\emph{Fractional derivatives for physicists and engineers},
 Berlin: Springer; 2013.

\bibitem{Mainardi_book}
Mainardi F,
\emph{Fractional calculus and waves in linear viscoelasticity.}
World Scientific; 2010.

\bibitem{WangHuang}
Wang Z, Huang X, Shi G,
Analysis of nonlinear dynamics and chaos in a fractional order financial system with time delay.
\emph{Comp Math Appl} 2011;62:1531--39.

\bibitem{DieFor}
Diethelm K, Ford NJ,
Analysis of fractional differential equations.
\emph{J Math Anal Appl} 2002;265:229--48.

\bibitem{DieBoo}
Diethelm K,
\emph{The Analysis of Fractional Differential Equations.}
Berlin: Springer; 2010.

\bibitem{DieNag}
Diethelm K,
The mean value theorems and a Nagumo-type uniqueness theorem for Caputo's fractional calculus.
\emph{Fract Calc Appl Anal} 2012;15(2):304--13.

\bibitem{KilSri}
Kilbas AA, Srivastava HM, Trujillo JJ,
\emph{Theory and Applications of Fractional Differential Equations.}
Amsterdam: Elsevier; 2006.

\bibitem{Lin}
Lin W,
Global existence theory and chaos control of fractional differential equations.
\emph{J Math Anal Appl} 2007;332:709--26.

\bibitem{LakLee}
Lakshmikantham V, Leela S,
Nagumo-type uniqueness result for fractional differential equations.
\emph{Nonlinear Anal TMA} 2009;71:2886--9.

\bibitem{Sin}
Sin C, Zheng L,
Existence and uniqueness of global solutions of Captuo-type fractional differential equations.
\emph{Fract Calc Appl Anal} 2016;19(3):765--74.


\bibitem{Benchohra}
Benchohra M, Henderson J, Ntouyas SK, Ouahab A,
Existence results for fractional order functional differential equations with infinite delay.
\emph{J Math Anal Appl} 2008;338:1340--50.


\bibitem{Lakshmikantham}
Lakshmikantham V,
Theory of fractional functional differential equations.
\emph{Nonlinear Anal TMA} 2008;69:3337--43.

\bibitem{ZhouJiao}
Zhou Y, Jiao F, Li J,
Existence and uniqueness for fractional neutral differential equations with infinite delay.
\emph{Nonlinear Anal TMA}  2009;71:3249--56.

\bibitem{AgarwalZhou}
Agarwal R, Zhou Y, He Y,
Existence of fractional neutral functional differential equations.
\emph{Comput Math Appl} 2010;59:1095--100.

\bibitem{Zhou}
Zhou Y,
\emph{Basic Theory of Fractional Differential Equations.}
Singapore: World Scientific; 2014.


\bibitem{YangCao}
Yang Z, Cao J,
Initial value problems for arbitrary order fractional differential equations with delay.
\emph{Commun Nonlinear Sci Numer Simul} 2013;18:2993--3005.


\bibitem{WangXiao}
Wang D, Xiao A, Liu H,
Dissipative and stability analysis for fractional functional differential equations.
\emph{Fract Calc Appl Anal} 2015;18(6):1399--422.

\end{thebibliography}
\end{document}